\theoremstyle{definition}
\newtheorem{defn}{Definition}
\newtheorem{lem}[defn]{Lemma}
\newtheorem{ex}[defn]{Example}
\theoremstyle{plain}
\newtheorem{thm}[defn]{Theorem}
\newtheorem{conj}[defn]{Conjecture}
\newcommand{\bR}{\ensuremath{\mathbb{R}}}
\newcommand{\bC}{\ensuremath{\mathbb{C}}}
\newcommand{\bZ}{\ensuremath{\mathbb{Z}}}
\newcommand{\bF}{\ensuremath{\mathbb{F}}}
\newcommand{\bQ}{\ensuremath{\mathbb{Q}}}
\newcommand{\bN}{\ensuremath{\mathbb{N}}}
\newcommand{\cW}{\ensuremath{\mathcal{W}}}
\newcommand{\cA}{\ensuremath{\mathcal{A}}}
\newcommand{\beq}{\begin{equation}}
\newcommand{\eeq}{\end{equation}}
\newcommand{\absip}[2]{\ensuremath{\left\vert\langle {#1},{#2} \rangle \right\vert}}
\newcommand{\abs}[1]{\ensuremath{\left\vert{#1} \right\vert}}
\newcommand{\lspan}{\operatorname{span}}
\newcommand{\tr}{\operatorname{tr}}
\newcommand{\FF}{\operatorname{TFF}}
\begin{document}

\title{New Constructions and Characterizations of Flat and Almost Flat Grassmannian Fusion Frames}
\author{Emily J.~King}

\maketitle

\begin{abstract} Configurations of subspaces like equichordal and equiisoclinic tight fusion frames, which are in some sense optimally spread apart and which also have reconstruction properties emulating those of orthonormal bases, are useful in various applications, such as wireless communications and quantum information theory.  In this paper, a new construction of infinite classes of equichordal tight fusion frames built on semiregular divisible difference sets is presented.  Sometimes this construction yields an equiisoclinic packing. Each of the constructed fusion frames is shown to have both a flat representation and a sparse representation. Furthermore, integrality conditions which characterize when equichordal and equiisoclinic fusion frames can have orthonormal bases with entries in a subring of the algebraic integers are proven.  
\textbf{Keywords:} fusion frame, Grassmannian packing, difference sets, simplex bound, equichordal, equiisoclinic \textbf{MSC 2010:} 42C15, 05B10, 14M15 \end{abstract}

\section{Introduction}
It is often of interest in fields like coding theory (see, e.g., \cite{Cre08,PWTH16,XZG05,KaPa03,GrassFus}), quantum information theory  (see, e.g., \cite{FHS17,AFZ15,ShSl98,GoRo09}), and more to find configurations of subspaces which in some sense emulate orthonormal bases.  That is, the orthogonal projections onto the subspaces provide a resolution of the identity and the subspaces are as close to ``orthogonal'' as possible for a such a typically overcomplete system (i.e., form an optimal packing of the Grassmannian under chordal distance). One may also ask that the entries of some orthonormal bases for the subspaces come from a finite alphabet, like a set of roots of unity \cite{STDH07,Massey1993,KaPa03}.

Most non-trivial constructions of optimal configurations of $1$-dimensional subspaces (so-called Grassmannian frames) arise from combinatorial designs or group actions. (See~\cite{FM15,JKM19} and the references therein.)  There is also work using algebraic number theory~\cite{ACFW18}, quantifier elimination~\cite{MiPa19}, or intuition and luck~\cite{FJM18} to ``exactify'' numerically found packings.  (See also~\cite{CKM16} in which Newton-Kantorovich is used to prove the existence of certain optimal packings given ``sufficiently good'' numerical approximations.)  It is also possible to generate optimal subspace configurations (not just $1$-dimensional) by transforming another optimal configuration. (See~\cite{King19b,FMW21} and the references therein.) In this paper, a new construction of optimal subspace packings arising from semiregular divisible difference sets (Theorem~\ref{thm:construct}) will be presented. This construction will then be modified so that it may be sparsely represented (Theorem~\ref{thm:sparse}). Finally, basic results about algebraic integers will be leveraged to prove necessary conditions for these Grassmannian packings to be represented by orthonormal bases with constant or almost constant modulus, i.e., which are flat or almost flat (Theorems~\ref{thm:integral1} and~\ref{thm:inteqio}). 

The structure of the paper is as follows.  In Section~\ref{sec:frame}, we will define Grassmannian fusion frames, which are optimal subspace configurations that emulate orthonormal bases.  The existing constructions of optimal configurations of lines via difference sets will also be reviewed.  The new constructions appear in Section~\ref{sec:maincon} and the necessary conditions for (almost) flatness are in Section~\ref{sec:intc}.

Throughout the paper, $\bF$ will always either denote $\bR$ or $\bC$.  Further for $n \in \bN$, we define $[n] := \{1, 2, \hdots, n\}$.

\section{Mathematical Background}\label{sec:frame}
\subsection{Fusion Frames}
We consider the properties of orthonormal bases which make them useful in applications.  Namely, for an orthonormal basis $\{ e_i \}_{i=1}^k$ of $\bF^k$, the sum of the rank-$1$ projections onto the subspaces spanned by each of the vectors gives a resolution of the identity
\begin{equation}\label{eq:ONBres}
I_{k} = \sum_{i=1}^k e_i e_i^\ast;
\end{equation}
each of the $e_i$ are unit norm, ensuring that no vector is more important than the others; and the angles between the lines spanned by the vectors are as large as possible for a set of $k$ lines in $\bF^k$, meaning that each vector represents different information.  We further note that the components of the vectors of the Fourier basis are equal in modulus while the non-zero components of the standard basis are equal in modulus, easing implementation.

Grassmannian fusion frames are flexible systems which provide representations of data that are more robust to erasures than orthonormal bases but which emulate the above-listed properties of orthonormal bases. Fusion frames (not necessarily Grassmannian) have also been known in the literature as \emph{stable space splittings of Hilbert spaces}~\cite{Osw94,Osw97}, \emph{systems of bounded quasi-projectors}~\cite{Forn03,Forn04}, \emph{(weighted projective) resolutions of the identity}~\cite{Forn03,Bod07}, \emph{$g$-frames}~\cite{Sun06}, and \emph{frames of subspaces}~\cite{CaK04}. We will use the name \emph{fusion frames} and the related terminology, which were introduced in~\cite{CKL08}. See \cite[Chapter 13]{CaKBook} for a general overview of fusion frames.

\begin{defn} 
A finite collection of subspaces $\lbrace \mathcal{W}_i \rbrace_{i=1}^n$ in $\mathbb{F}^k$ is a \emph{tight (equidimensional) fusion frame (with unit weights)} for $\mathbb{F}^k$ if there exists an $A > 0$ (called the \emph{fusion frame bound}) satisfying
\begin{equation}\label{eqn:tight4}
x = \frac{1}{A} \sum_{i=1}^n P_i x, \quad \textrm{for all $x \in \bF^k$},
\end{equation}
where $P_i$ is an orthogonal projection onto $\mathcal{W}_i$. The map $x \mapsto \sum_{i=1}^n P_i x$ is called the \emph{fusion frame operator}.
\end{defn}
In this article, the subspaces will always be equal dimensional of dimension $m$. We will write $\lbrace \mathcal{W}_i \rbrace_{i} \in \FF(\bF^k,m,n)$ when $\lbrace \mathcal{W}_i \rbrace_{i}$ forms a tight fusion frame of $m$-dimensional subspaces  with unit weights for $\bF^k$.  See \cite{BaEh13,EhGr16,BLR15} for results about fusion frames and packings when the dimensions of the subspaces are unequal. When each $\cW_i$ is of dimension $m=1$ in a tight fusion frame with unit weights, the result is known as a \emph{finite unit norm tight frame (FUNTF)} (see, e.g., \cite{CaKBook,Waldron18}).  The resolution of the identity in~\eqref{eqn:tight4} is the analog of~\eqref{eq:ONBres}, and the fact that we are not weighting the projections in~\eqref{eqn:tight4} generalizes the unit-norm condition of orthonormal bases. We will use the chordal distance in order to expand the concept of orthogonality from bases of vectors to possibly redundant systems of subspaces.
\begin{defn}\label{defn:chor}
For $1 \leq m \leq k$, set $Gr(k,m)$ to be the collection of $m$ dimensional subspaces of $\mathbb{F}^k$.  $Gr(k,m)$ is called a \emph{Grassmannian}.  One metric that $Gr(k,m)$ may be endowed with is the \emph{chordal distance} (see, e.g., \cite{GrassPack})
\begin{equation*}
d_c(\mathcal{W}_i,\mathcal{W}_j) = [m - \tr(P_i P_j)]^{1/2},
\end{equation*}
for $\mathcal{W}_i, \mathcal{W}_j \in Gr(k,m)$, where $P_i$ is the orthogonal projection onto $\mathcal{W}_i$.  
\end{defn}
The \emph{Grassmannian packing problem} is the problem of finding $n$ elements in $Gr(k,m)$ so that the minimal distance between any two of them is as large as possible.  A numerical approach to solving this problem may be found in \cite{DHST08}, and a list of various packings is posted on \cite{Sloane}.  
\begin{defn}\label{defn:diffeq} \cite{LemSei73,BjGo73}
Let $\{ \cW_i\}_{i=1}^n \subset Gr(k,m)$ (not necessarily a fusion frame) with corresponding orthonormal bases as the columns of $\{L_i\}_{i=1}^n$. Then we say
\begin{itemize}
\item $\{ \cW_i\}_{i=1}^n$ is \emph{equichordal} when for all $i,j \in [n]$ with $i \neq j$, $\tr(L_i^\ast L_j L_j^\ast L_i)$ is constant; and
\item $\{ \cW_i\}_{i=1}^n$ is \emph{equiisoclinic} when there exists an $\alpha > 0$ such that for all $i,j \in [n]$ with $i \neq j$, $L_i^\ast L_j L_j^\ast L_i =\alpha I_m$.
\end{itemize}
\end{defn}
\begin{thm}\label{thm:simplex} \cite{Ran55,GrassPack}
Let $\{ \cW_i\}_{i=1}^n \subset Gr(k,m)$, then
\beq\label{eqn:simplex}
\min_{i,j \in [n], i\neq j}  d^2_c(\mathcal{W}_i,\mathcal{W}_j)  \leq \frac{m(k-m)n}{k(n-1)}.
\eeq
The bound in~\eqref{eqn:simplex} is saturated if and only if $\{ \cW_i\}_{i=1}^n$ is an \emph{equichordal tight fusion frame}.  For fixed parameters $n$, $m$, $k$, and $\bF$, the maximizers  $\{ \cW_i\}_{i=1}^n$ of~\eqref{eqn:simplex} are called \emph{Grassmannian fusion frames}.
\end{thm}

Thus if a tight fusion frame is equichordal or equiisoclinic it is a Grassmannian fusion frame since either of those configurations, when they exist, have equal and thus optimal pairwise chordal distances; however, for many parameter sets $n$, $m$, $k$, and $\bF$ there does not exist an equichordal tight fusion frame.  When $m=1$, an equichordal tight fusion frame is called an \emph{equiangular tight frame}.

Using basic trace arguments (see, e.g., \cite{FJMW17}), one can show that if  $\lbrace \mathcal{W}_i \rbrace_{i} \in \FF(\bF^k,m,n)$, then the fusion frame bound must be $A = \frac{nm}{k}$.  By similar arguments, one can show that if further $\lbrace \mathcal{W}_i \rbrace_{i}$ is equiisoclinic with \emph{equiisoclinic parameter} $\alpha$, then 
\beq\label{eqn:equiiso}
\alpha = \frac{mn-k}{k(n-1)}.
\eeq

 Under various models (e.g., deterministic or probabilistic, equichordal or equiisoclinic, etc.), Grassmannian fusion frames are optimally robust to noise and erasures~\cite{Bod07,CaK08,GrassFus,SAH14,EKB10,GKK01,StH03,HoPa04}. We end this section by introducing the concepts of flatness and almost flatness and fixing some final notation.
\begin{defn}
Given $\lbrace \mathcal{W}_i \rbrace_{i} \in \FF(\bF^k,m,n)$, we fix for each $i \in [n]$ an orthonormal basis $\{e_j^i\}_{j=1}^m$ for the subspace $\cW_i$ and denote by $L_i$ the $k \times m$ matrix $(e_1^i e_2^i \hdots e_m^i)$. We further define
\[
L  = \left( \begin{array}{cccc} L_1 & L_2 & \hdots & L_n\end{array}\right).
\]
If we can choose orthonormal bases for the subspaces such that the entries of $L$ all have the same modulus, then we say that the vectors and the associated fusion frame (with respect to the choice of orthonormal bases) are \emph{flat}, and similarly if all of the nonzero entries of $L$ have the same modulus, we say they are \emph{almost flat}.
\end{defn}
Flat fusion frames are the redundant, higher dimensional analogs of the Fourier bases, while almost flat fusion frames generalize the standard orthonormal bases.  Note that  the fusion frame operator is equal to $LL^\ast$ and for all $i$, $P_i =  L_i L_i^\ast$.


\subsection{Difference Sets}\label{sec:diff}
One class of constructions of equiangular tight frames uses difference sets and characters \cite{StH03,DiFe07,GoRo09,XZG05}.  For a general reference about difference sets and character theory, see~\cite{Pott}.
\begin{defn}
Let $G$ be a finite abelian group of size $n$. Define $\widehat{G}$ to be the collection of all homomorphisms $\chi: G \rightarrow S^1 \subset \bC$. Endowed with pointwise multiplication, $\widehat{G}$ forms a group which is isomorphic to $G$.  The elements are called \emph{characters} and the character $\chi_0$ which maps all elements of $G$ to $1$ is called the \emph{principal character}.
\end{defn}
For any subset $S \subseteq G$ and any character $\chi \in \widehat{G}$, we define the following element in $\bC$
\[
\chi(S) = \sum_{s \in S} \chi(s).
\]
Characters satisfy certain orthogonality relations.
\begin{lem}\label{lem:orthrel}
Let $G$ be a finite abelian group of size $n$. Then
\[
\chi(G)=  \left\{ \begin{array}{lr} n & \textrm{if $\chi = \chi_0$} \\ 0 & \textrm{if $\chi \neq \chi_0$}  \end{array} \right. \quad \textrm{and} \quad \sum_{\chi \in \widehat{G}} \chi(g)=  \left\{ \begin{array}{lr} n & \textrm{if $g=0$} \\ 0 & \textrm{if $g \neq 0$}  \end{array} \right..
\]
\end{lem}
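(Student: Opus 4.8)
The plan is to prove both identities with the same elementary averaging argument, splitting each into the trivial and nontrivial cases. For the first identity, $\chi(G) = \sum_{g \in G} \chi(g)$, the case $\chi = \chi_0$ is immediate since $\chi_0$ sends every element to $1$, yielding a sum of $n$ ones. For $\chi \neq \chi_0$ I would first pick an element $h \in G$ with $\chi(h) \neq 1$, which exists precisely because $\chi$ is not the principal character. The key observation is that translation $g \mapsto h + g$ is a bijection of $G$, so reindexing gives
\[
\chi(h)\,\chi(G) = \sum_{g \in G} \chi(h)\chi(g) = \sum_{g \in G} \chi(h+g) = \sum_{g' \in G} \chi(g') = \chi(G).
\]
Hence $(\chi(h) - 1)\chi(G) = 0$, and since $\chi(h) \neq 1$ we conclude $\chi(G) = 0$.

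For the second identity, $\sum_{\chi \in \widehat{G}} \chi(g)$, the case $g = 0$ is again immediate: every character satisfies $\chi(0) = 1$, and since $\widehat{G} \cong G$ we have $|\widehat{G}| = n$, so the sum equals $n$. For $g \neq 0$ I would mirror the argument above, now translating in the character group: choose a character $\psi$ with $\psi(g) \neq 1$, observe that $\chi \mapsto \psi\chi$ permutes $\widehat{G}$, and compute
\[
\psi(g)\sum_{\chi \in \widehat{G}} \chi(g) = \sum_{\chi \in \widehat{G}} (\psi\chi)(g) = \sum_{\chi \in \widehat{G}} \chi(g),
\]
which forces the sum to vanish exactly as before.

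The \emph{main obstacle} is the single nontrivial input needed for the $g \neq 0$ case, namely the existence of a character $\psi$ separating $g$ from $0$, i.e.\ $\psi(g) \neq 1$. The first identity requires no such external fact, since nontriviality of $\chi$ directly supplies a suitable $h$. For point separation I would invoke the structure theorem for finite abelian groups, writing $G$ as a product of cyclic groups $\bZ/m_i\bZ$ and using the explicit characters $x \mapsto e^{2\pi i x / m_i}$ on each factor, which clearly detect any nonzero component; alternatively one can deduce it from the stated isomorphism $\widehat{G} \cong G$ together with the natural identification $G \cong \widehat{\widehat{G}}$. Everything else is the same permutation-invariance trick applied once in $G$ and once in $\widehat{G}$.
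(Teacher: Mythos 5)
Your proof is correct. The paper states this lemma without proof, treating it as a standard fact from character theory (deferring to references such as \cite{Pott} and \cite{Fol16}), so there is no in-paper argument to compare against; your translation-invariance argument in $G$ for the first identity and in $\widehat{G}$ for the second is the standard one, and you correctly identify and discharge the only nontrivial input --- the existence of a character separating a nonzero $g$ from the identity --- via the structure theorem for finite abelian groups.
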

The orthogonality relations are important to us because of their application to character tables.
\begin{defn}
Let $G$ be a finite abelian group of size $n$. Let $X_G$ be an $n \times n$ matrix with rows labeled by elements of $G$ and columns by elements of $\widehat{G}$.  Define the element in row $g$ column $\chi$ to be $\chi(g)$.  We call $X_G$ the \emph{character table} of $G$.
\end{defn}
By construction, each element of $X_G$ has modulus $1$.  Furthermore, it follows from Lemma~\ref{lem:orthrel} that $(1/\sqrt{n}) X_G$ is an orthogonal matrix.
\begin{ex}\label{ex:dft}
Let $G = \bZ_n$, the integers mod $n$.  Then the $n \times n$ (properly scaled) discrete Fourier transform matrix $(e^{-2\pi i jk/n})_{j,k=0}^n$ is $X_G$. If $G = \oplus_{\ell=0}^L \bZ_{n_\ell}$, then $X_G$ is the Kronecker product of the $X_{\bZ_{n_\ell}}$, ordered lexicograhically.
\end{ex}
Since the rows of character tables are equal norm and orthogonal, if we take any subset $S \subset G$ and choose a submatrix of $X_G$ with rows denoted by $S$ and columns all of $\widehat{G}$, the columns of the submatrix $L$ will always form a flat finite unit norm tight frame after appropriate scaling since $L L^\ast$ is the frame operator.  In order to generate an equiangular tight frame we must be more careful about how we select the rows.
\begin{defn}\label{defn:ds}
Let $G$ be a finite abelian group of size $n$.  If $D \subseteq G$ is a subset of size $k$ such that the multiset 
\[
\Delta(D) = \{ d_i - d_j: d_i, d_j \in D, d_i \neq d_j\}
\]
contains $\lambda$ copies of each non-identity element of $G$, then we say that $D$ is an $(n,k,\lambda)$-difference set.
\end{defn}
Examples of constructions of equiangular tight frames using difference sets in $G = \bZ_n$ or $\oplus_{i=1}^r \bZ_2$ appeared in \cite{StH03,DiFe07,GoRo09}, while the following theorem may be found in \cite{XZG05}. 
\begin{thm}\label{thm:DS1}
For an abelian group $G$ of size $n$, let $D \subseteq G$ have size $k$. We will write the elements of $D$ without indices, but set $\{\chi_i\}_{i=1}^n$ as an enumeration of $\widehat{G}$.  For each $1 \leq i \leq n$, we define the vector
\[
e_i : = \frac{1}{\sqrt{k}}\left( \chi_i(g)\right)_{g \in D} \in \bF^k.
\]
Then $\{e_i\}_{i=1}^n$ is an equiangular tight frame if and only if $D$ is a $(n,k,\lambda)$-difference set in $G$.
\end{thm}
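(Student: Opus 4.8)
The plan is to exploit the fact that the synthesis matrix $L = \frac{1}{\sqrt k}\bigl(\chi_i(g)\bigr)_{g\in D,\, i\in[n]}$ is a row-selection of the character table $X_G$, so that two of the three defining features of an ETF---unit norm and tightness---come essentially for free, and only equiangularity encodes the difference-set structure.

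First I would record that each $e_i$ is automatically unit norm: since $\abs{\chi_i(g)}=1$ for every $g$, we get $\norm{e_i}^2 = \frac1k\sum_{g\in D}\abs{\chi_i(g)}^2 = 1$. Next I would show that $\{e_i\}$ is always tight with frame bound $n/k$, independent of $D$. The rows of $L$ are indexed by $g\in D$, and the $(g,g')$ entry of $LL^\ast$ is $\frac1k\sum_{i=1}^n\chi_i(g)\overline{\chi_i(g')} = \frac1k\sum_{\chi\in\widehat G}\chi(g-g')$, which by the second orthogonality relation in Lemma~\ref{lem:orthrel} equals $\frac nk\delta_{g,g'}$. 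Hence $LL^\ast = \frac nk I_k$, and by Theorem~\ref{thm:framech} the system $\{e_i\}$ is a tight frame. Consequently being an ETF is equivalent to being equiangular.

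It then remains to translate equiangularity into the difference-set condition. A direct computation gives $\ip{e_i}{e_j} = \frac1k\sum_{g\in D}(\chi_i\overline{\chi_j})(g) = \frac1k\psi(D)$, where $\psi := \chi_i\overline{\chi_j}$ is again a character. As $(i,j)$ ranges over all pairs with $i\neq j$, the character $\psi$ ranges over exactly the non-principal characters (given non-principal $\psi$, choose any $\chi_j$ and set $\chi_i = \psi\chi_j$). Therefore $\{e_i\}$ is equiangular if and only if $\abs{\psi(D)}$ takes a common value over all $\psi\neq\chi_0$. The forward direction is then immediate from the difference-set characterization (\ref{eqn:innpro3}): if $D$ is an $(n,k,\lambda)$-difference set then $\abs{\psi(D)}^2 = k-\lambda$ for all non-principal $\psi$, so the off-diagonal inner products all have modulus $\sqrt{k-\lambda}/k$, and equiangularity (hence the ETF property) follows.

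The converse is the step I expect to require the most care. Assuming $\{e_i\}$ is equiangular, I have $\abs{\psi(D)}^2 = c$ for some constant $c$ and every non-principal $\psi$. Writing $a_h$ for the multiplicity of $h$ in $\Delta(D)$ and setting $a_0 := k$, I would expand $\abs{\chi(D)}^2 = \sum_{h\in G}a_h\chi(h)$ and invert using Lemma~\ref{lem:orthrel}: Fourier inversion yields $a_h = \frac1n\sum_{\chi}\abs{\chi(D)}^2\,\overline{\chi(h)}$, and substituting $\abs{\chi(D)}^2 = c + (k^2-c)\delta_{\chi,\chi_0}$ collapses the sum to $a_h = \frac{k^2-c}{n}$ for every $h\neq 0$. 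Thus all nonzero difference multiplicities coincide, so $D$ is a difference set with $\lambda = (k^2-c)/n$; note this $\lambda$ is automatically a nonnegative integer because it counts elements of the multiset $\Delta(D)$, which sidesteps any separate integrality verification. Matching constants, or equivalently summing $\abs{\chi(D)}^2$ over all $\chi$ (which equals $nk$ by orthogonality, while also equalling $k^2+(n-1)c$), identifies $c = k(n-k)/(n-1)$ and $\lambda = k(k-1)/(n-1)$, completing the equivalence.
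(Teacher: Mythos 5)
Your proof is correct and follows the same route as the paper's: the row-orthogonality of the character table gives unit norm and tightness for free, and writing $\ip{e_i}{e_j} = \frac{1}{k}(\chi_i\overline{\chi_j})(D)$ reduces equiangularity to the constancy of $\abs{\psi(D)}$ over non-principal characters $\psi$, which is exactly the character-sum characterization of difference sets in (\ref{eqn:innpro3}). The only place you go beyond the paper's one-line sketch is the converse, where your Fourier inversion of the multiplicities $a_h$ supplies the detail the paper delegates to the cited lemma; that computation is correct, as is the identification $c = k(n-k)/(n-1)$ and $\lambda = k(k-1)/(n-1)$.
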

There are related constructions of Grassmannian frames which are not equiangular tight frames using so-called relative difference sets or augmenting equiangular tight frames constructed via Theorem~\ref{thm:DS1} from difference sets with particular parameters~\cite{GoRo09,BH15}.

\section{Equichordal Tight Fusion Frames via Semiregular Divisible Difference Sets}\label{sec:maincon}
The new construction in this section of equichordal tight fusion frames uses semiregular divisible difference sets~\cite{Jung82}.
\begin{defn}
Let $G$ be a finite abelian group of size $mn$ and $D \subseteq G$ a subset of size $k$.  Further let $N \leq G$ be a subgroup of size $n$.  Finally consider the multiset $\Delta(G) = \{d_i - d_j: d_i, d_j \in D, d_i \neq d_j \}$. We say that $D$ is an \emph{$(m,n,k,\lambda_1,\lambda_2)$-divisible difference set} if $\Delta$ contains each element of $(G \cap N^c)$ $\lambda_2$ times and each element of $(N \cap \{0\}^c)$ $\lambda_1$ times.  If further $k > \lambda_1$ and $k^2 - \lambda_2 mn = 0$, we call $D$ \emph{semiregular}.
\end{defn}
The difference sets defined in Definition~\ref{defn:ds} are $(1,n,k, \lambda, \cdot)$-divisible difference sets.  ($\lambda_2$ is superfluous since $G = N$.)
By a simple counting argument, we see that for an $(m,n,k,\lambda_1,\lambda_2)$-divisible difference set,
\beq \label{eqn:ddscount}
k(k-1) = \lambda_1 (n-1) + \lambda_2 (mn-n).
\eeq
We will characterize divisible difference sets using sums of character evaluations \cite{Dav98}.
\begin{lem}\label{lem:div}
$D$ is a $(m,n,k,\lambda_1,\lambda_2)$-divisible difference set in $G$ relative to $N$ if and only if
\[
\abs{\chi(D)}^2=  \left\{ \begin{array}{lr} k^2 & \textrm{if $\chi = \chi_0$} \\ k- \lambda_1  & \textrm{if there is an $h \in N$ with $\chi(h) \neq 1$}  \\ k^2- \lambda_2mn  & \textrm{if $\chi \neq \chi_0$ and $\chi(h)=1$ for all $h \in N$}   \end{array} \right.
\]
\end{lem}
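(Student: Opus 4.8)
The plan is to mimic the proof strategy already sketched for ordinary difference sets (Lemma following Definition~\ref{defn:ds}), namely to encode the combinatorial definition as an identity in the group ring $\bC G$ and then apply an arbitrary character $\chi$. First I would introduce the element
\[
\Delta = \left(\sum_{g \in D} g\right)\left(\sum_{h \in D} -h\right) \in \bC G,
\]
and observe that by the definition of a $(m,n,k,\lambda_1,\lambda_2)$-divisible difference set this equals
\[
\Delta = k\, 0 + \lambda_1 (N \cap \{0\}^c) + \lambda_2 (G \cap N^c),
\]
where I am abusing notation to identify a subset with the formal sum of its elements. I would then rewrite the right-hand side in terms of the clean group elements $0$, $N$, and $G$: since $N \cap \{0\}^c = N - 0$ and $G \cap N^c = G - N$, we get
\[
\Delta = (k - \lambda_1)\,0 + (\lambda_1 - \lambda_2) N + \lambda_2 G.
\]

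Next I would apply the character $\chi$ to this identity, using that $\chi$ is a ring homomorphism and hence $\chi(\Delta) = \chi(D)\overline{\chi(D)} = \abs{\chi(D)}^2$ (because $\chi(-h) = \overline{\chi(h)}$ for characters into $S^1$). Applying $\chi$ to the right-hand side gives
\[
\abs{\chi(D)}^2 = (k-\lambda_1) + (\lambda_1 - \lambda_2)\chi(N) + \lambda_2 \chi(G).
\]
Now I would split into the three cases of the statement. By Lemma~\ref{lem:orthrel} applied to $G$, we have $\chi(G) = n m \cdot \mathbbm{1}_{\{\chi = \chi_0\}}$, and by the analogous orthogonality relation applied to the subgroup $N$ (of size $n$) one has $\chi(N) = n$ when $\chi$ is trivial on $N$ and $\chi(N) = 0$ otherwise. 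The trichotomy in the lemma is exactly the trichotomy ``$\chi = \chi_0$'' versus ``$\chi$ nontrivial on $N$'' versus ``$\chi \neq \chi_0$ but trivial on $N$,'' so I would plug the corresponding values of $\chi(N)$ and $\chi(G)$ into the displayed formula.

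In the first case $\chi = \chi_0$, both $\chi(N)=n$ and $\chi(G)=mn$, yielding $(k-\lambda_1) + (\lambda_1-\lambda_2)n + \lambda_2 mn$, which I would simplify to $k^2$ using the counting identity~(\ref{eqn:ddscount}). In the second case ($\chi$ nontrivial on $N$) both $\chi(N)=0$ and $\chi(G)=0$, giving exactly $k-\lambda_1$. In the third case ($\chi\neq\chi_0$, trivial on $N$) we have $\chi(N)=n$ but $\chi(G)=0$, giving $(k-\lambda_1) + (\lambda_1-\lambda_2)n$, which I would simplify to $k^2 - \lambda_2 mn$ again via~(\ref{eqn:ddscount}). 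For the converse direction, I would reverse the argument: assuming the three character values hold, one reconstructs $\chi(\Delta)$ for every $\chi$, and since the characters form a basis of $\bC G$ (the character table is invertible by Lemma~\ref{lem:orthrel}) this uniquely determines $\Delta$ as an element of the group ring, which forces the required multiplicities in $\Delta(D)$.

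The main obstacle I anticipate is the bookkeeping in verifying that the orthogonality relation of Lemma~\ref{lem:orthrel} for the ambient group $G$ correctly specializes to the subgroup $N$ --- that is, justifying $\chi(N)=n\cdot\mathbbm{1}_{\{\chi|_N = \chi_0\}}$. This follows because the restriction $\chi|_N$ is a character of $N$, and the sum $\sum_{h\in N}\chi(h)=\chi|_N(N)$ is $n$ or $0$ according to whether $\chi|_N$ is principal, by the same lemma applied to $N$. The only other delicate point is the clean algebraic simplification via~(\ref{eqn:ddscount}) in cases one and three, which is routine but must be tracked carefully to land exactly on $k^2$ and $k^2 - \lambda_2 mn$.
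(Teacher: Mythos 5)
Your proposal is correct and follows exactly the route the paper intends: it gestures at the same group-ring identity (applying characters to $DD^{(-1)}$ and invoking Lemma~\ref{lem:orthrel}) for the ordinary difference-set case and otherwise defers to \cite{Dav98}, and your case analysis and simplifications via~(\ref{eqn:ddscount}) all check out. The only point worth a line more of care is the converse: the counting identity is not a hypothesis there, but it falls out automatically by comparing the Fourier-inverted coefficient at $0$ with the fact that $DD^{(-1)}$ always has coefficient $k=\abs{D}$ at the identity.
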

We will make also make use of the following duality result (see, e.g.,~\cite{Pott}).
\begin{lem}\label{lem:ann}
For any subgroup $N \leq G$,
\[
N^\perp = \{\chi \in \widehat{G}: \chi(h) = 1 \textrm{ for all } h \in N \}  
\]
is a subgroup of $\widehat{G}$ (the \emph{annihilator}, alternatively, the characters \emph{principal on $N$}) which is isomorphic to $G/N$.  More precisely, the mapping of $\chi \in N^\perp$ to $\chi' \in \widehat{G/N}$ where 
\[
\chi': G/N \rightarrow S^1 \subset \bC, \quad \chi'(g + N):=\chi(g)
\]
is an isomorphism.
\end{lem}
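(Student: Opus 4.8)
The plan is to verify directly that the stated map is a group isomorphism from $N^\perp$ onto $\widehat{G/N}$, and then to invoke the duality $\widehat{G/N} \cong G/N$ (the isomorphism $G \cong \widehat{G}$ already recorded in the excerpt, applied to the group $G/N$) to obtain the final claim $N^\perp \cong G/N$. First I would dispense with the routine observation that $N^\perp$ is a subgroup of $\widehat{G}$: it contains $\chi_0$, and if $\chi_1(h) = \chi_2(h) = 1$ for all $h \in N$, then the same holds for $\chi_1 \chi_2$ and for $\chi_1^{-1}$, since these characters are evaluated pointwise.

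The core of the argument is to analyze the map $\Phi : \chi \mapsto \chi'$, where $\chi'(g + N) := \chi(g)$, and I would organize this into four checks. First, \emph{well-definedness}: if $g + N = \tilde{g} + N$, then $g - \tilde{g} \in N$, so $\chi(g - \tilde{g}) = 1$ precisely because $\chi \in N^\perp$, whence $\chi(g) = \chi(\tilde{g})$ and $\chi'$ does not depend on the choice of coset representative. Second, $\chi'$ is genuinely a character of $G/N$: it lands in $S^1 \subset \bC$ since $\chi$ does, and the multiplicativity of $\chi$ on $G$ passes to $\chi'$ on the quotient. Third, $\Phi$ is a homomorphism, which is immediate since the group laws on both $\widehat{G}$ and $\widehat{G/N}$ are pointwise multiplication and $\chi'$ is defined by evaluating $\chi$.

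Fourth, I would establish that $\Phi$ is a bijection. For injectivity, if $\Phi(\chi)$ is the principal character of $G/N$, then $\chi(g) = \chi'(g + N) = 1$ for every $g \in G$, so $\chi = \chi_0$ and the kernel of $\Phi$ is trivial. For surjectivity, given $\psi \in \widehat{G/N}$, I would pull it back along the quotient map $\pi : G \to G/N$, setting $\chi := \psi \circ \pi$; this $\chi$ is a character of $G$, it satisfies $\chi(h) = \psi(h + N) = \psi(N) = 1$ for $h \in N$ (so $\chi \in N^\perp$), and by construction $\Phi(\chi) = \psi$.

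The only step that is not purely formal is the well-definedness of $\chi'$, and it is worth flagging that this is exactly where the defining property of the annihilator is used; it is the crux rather than a genuine obstacle. Indeed, the cleanest conceptual view is that $\Phi$ is the inverse of the pullback $\pi^\ast : \widehat{G/N} \to \widehat{G}$, $\psi \mapsto \psi \circ \pi$, whose image is precisely $N^\perp$ and which is injective because $\pi$ is surjective; the four checks above simply make this concrete. I do not anticipate any real difficulty in carrying it out.
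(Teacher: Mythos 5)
Your proof is correct and complete. Note that the paper does not actually prove Lemma~\ref{lem:ann} at all --- it is stated as a standard fact with a pointer to \cite{Fol16} --- so there is no in-paper argument to compare against; your direct verification (well-definedness of $\chi'$ via the annihilator condition, the homomorphism property, injectivity via the trivial kernel, surjectivity via pullback along the quotient map, and the final appeal to $\widehat{G/N}\cong G/N$) is exactly the standard argument one finds in the cited reference, and you correctly identify well-definedness as the one step where $\chi\in N^\perp$ is actually used.
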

We now have all of the elements we need to present the construction of Grassmannian fusion frames using divisible difference sets.  The basic idea is to remove the rows corresponding to a semiregular divisible difference set from a character table similar to what was done in Theorem~\ref{thm:DS1} and then cluster the columns according to cosets of the annihilator of $N$ in $\widehat{G}$.  As before, for simplicity in notation, the elements of the difference set are expressed simply as $g$, without any index.
\begin{thm}\label{thm:construct}
Let $D$ be a semiregular $(m,n,k,\lambda_1,\lambda_2)$-divisible difference set in $G$ relative to $N$. Let $\{\eta_j\}_{j=1}^m$ be an enumeration of $N^\perp$, and let $\{\chi_i\}_{i=1}^n$ be a set of coset representatives of $\widehat{G}/N^\perp$. Then for each $1 \leq i \leq n$,
\[
\left\{ e^i_j := \frac{1}{\sqrt{k}}\left((\chi_i \eta_j)(g)\right)_{g \in D} \right\}_{j=1}^m \subset \bF^k
\]
is a set of $m$ flat orthonormal vectors in $\bF^k$.  If we further set for each $1 \leq i \leq n$, $\cW_i = \lspan \{e^i_j\}_{j=1}^m$, then $\{\cW_i\}_{i=1}^n$ is an equichordal tight fusion frame for $\bF^k$ with frame bound $nm/k$ consisting of $n$ $m$-dimensional subspaces.
\end{thm}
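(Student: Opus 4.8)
The plan is to exploit that the $mn$ characters $\chi_i\eta_j$, as $i$ ranges over the coset representatives and $j$ over $N^\perp$, enumerate all of $\widehat{G}$ exactly once. First I would record the cardinalities: by Lemma~\ref{lem:ann} we have $\abs{N^\perp} = \abs{G/N} = m$ and hence $\abs{\widehat{G}/N^\perp} = mn/m = n$, so the product collection $\{\chi_i\eta_j\}$ has exactly $mn$ elements and forms a transversal-times-subgroup decomposition of $\widehat{G}$. Flatness is then immediate, since every entry $\frac{1}{\sqrt{k}}(\chi_i\eta_j)(g)$ has modulus $1/\sqrt{k}$ because characters take values in $S^1$.

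Next I would show each $\{e^i_j\}_{j=1}^m$ is orthonormal. Expanding the inner product, the factor $\chi_i$ cancels against its own conjugate, leaving $\ip{e^i_j}{e^i_{j'}} = \frac{1}{k}(\eta_j\eta_{j'}^{-1})(D)$, where $\eta_j\eta_{j'}^{-1}\in N^\perp$. When $j=j'$ this character is $\chi_0$ and evaluates to $k$ on $D$, giving unit norm; when $j\neq j'$ it is a nonprincipal character that is principal on $N$, so by the third case of Lemma~\ref{lem:div} we get $\abs{(\eta_j\eta_{j'}^{-1})(D)}^2 = k^2 - \lambda_2 mn$. This is exactly where semiregularity enters: the hypothesis $k^2 - \lambda_2 mn = 0$ forces the sum to vanish, so the vectors are orthogonal. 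With orthonormal columns we have $L_i^\ast L_i = I_m$, so each $P_i = L_iL_i^\ast$ is a genuine orthogonal projection as in Theorem~\ref{thm:ff}.

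For the equichordal property I would compute $\tr(P_iP_{i'}) = \sum_{j,j'}\absip{e^i_j}{e^{i'}_{j'}}^2$ for $i\neq i'$. Each term equals $\frac{1}{k^2}\abs{\psi(D)}^2$ where $\psi = \chi_i\chi_{i'}^{-1}\eta_j\eta_{j'}^{-1}$. The crucial bookkeeping step is that distinct coset representatives give $\chi_i\chi_{i'}^{-1}\notin N^\perp$, hence $\psi\notin N^\perp$, so there is an $h\in N$ with $\psi(h)\neq 1$ and the second case of Lemma~\ref{lem:div} yields $\abs{\psi(D)}^2 = k-\lambda_1$ independently of $j,j'$. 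Summing the $m^2$ identical terms gives $\tr(P_iP_{i'}) = m^2(k-\lambda_1)/k^2$, a constant, so by Definition~\ref{defn:chor} all pairwise chordal distances coincide and the packing is equichordal.

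Finally, for tightness I would pass to the full synthesis operator $L = (L_1\;\cdots\;L_n)$, whose $mn$ columns are precisely the vectors $\frac{1}{\sqrt{k}}(\psi(g))_{g\in D}$ as $\psi$ runs over all of $\widehat{G}$. The $(g,g')$ entry of $LL^\ast$ is then $\frac{1}{k}\sum_{\psi\in\widehat{G}}\psi(g-g')$, which by the second orthogonality relation in Lemma~\ref{lem:orthrel} equals $\frac{mn}{k}$ when $g=g'$ and $0$ otherwise. Thus $\sum_{i=1}^n P_i = LL^\ast = \frac{mn}{k}I_k$, giving a tight fusion frame with bound $nm/k$ by Theorem~\ref{thm:ff}. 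I expect the main obstacle to be the careful tracking of which subgroup or coset each product of characters lands in, so that the correct branch of Lemma~\ref{lem:div} is invoked in each computation; in particular, the single use of the semiregularity identity $k^2 = \lambda_2 mn$ to annihilate the within-subspace cross terms is the linchpin of the entire argument.
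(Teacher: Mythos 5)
Your proposal is correct and follows essentially the same route as the paper: flatness from unimodularity of characters, within-block orthonormality from the semiregularity identity $k^2 - \lambda_2 mn = 0$ applied to the third branch of Lemma~\ref{lem:div}, equichordality from the constant value $m^2(k-\lambda_1)/k^2$ of $\tr(P_iP_{i'})$ via the second branch, and tightness from the fact that the columns of $L$ run over all of $\widehat{G}$ so that $LL^\ast = \frac{mn}{k}I_k$ by the orthogonality relations (the paper phrases this as $L$ being a rescaled, column-permuted row-submatrix of the character table $X_G$, which is the same observation).
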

\begin{proof}
The vectors are flat by construction.  Since $D$ is semiregular, $k^2 - \lambda_2 mn = 0$.  Plugging this into Lemma~\ref{lem:div} and using Lemma~\ref{lem:ann}, we obtain
\beq\label{eqn:innpro}
\absip{(\chi(g))_{g \in D}}{(\tilde{\chi}(g))_{g \in D}}^2=  \left\{ \begin{array}{lr} k^2 & \textrm{if $\chi = \tilde{\chi}$} \\ k- \lambda_1  & \textrm{if $\chi (\tilde{\chi}^{-1}) \notin N^\perp$}  \\ 0  & \textrm{if $\chi(\tilde{\chi}^{-1}) \in N^\perp \cap \{ \chi_0\}^c$}   \end{array} \right. .
\eeq
Fix $i \in \{1, \hdots, n\}$ and choose $j, \tilde{j} \in \{1, \hdots, m\}$.  Then using Equation~\ref{eqn:innpro} we can compute $\absip{e^i_j}{e^i_{\tilde{j}}}^2 = \delta_{j,\tilde{j}}$ since  $(\chi_i \eta_j) (\chi_i \eta_{\tilde{j}})^{-1} = \eta_j (\eta_{\tilde{j}})^{-1} \in N^\perp$.
Thus for each $i$, $\{ e_j^i\}_j$ is a set of orthonormal vectors.

Since $\left\{\chi_i \eta_j : j \in \{1, \hdots, m\}, i \in \{1, \hdots, n\} \right\}$ is an enumeration of $\hat{G}$. The matrix
\[
L = \left( e_1^1 \vert e_2^1 \vert \cdots \vert e_m^1 \vert e_1^2 \vert \cdots \vert e_m^n \right)
\]
is the result of removing the rows corresponding to $D$ from $X_G$, rescaling the entries by $1/\sqrt{k}$, and permuting the columns.  Hence the rows of $L$ are equal-norm of norm $\sqrt{nm/k}$ and orthogonal.  Thus, $\{\cW_i\}_{i=1}^n$ is a tight fusion frame with bound $nm/k$.

Note that by Equation~\ref{eqn:innpro}, the modulus of the inner product of any $e_j^i$ and $e_{\tilde{j}}^{\tilde{i}}$ with $i \neq \tilde{i}$ is constant, namely $\sqrt{k - \lambda_1}/k$.  We would like to show that the fusion frame is equichordal.  We begin by defining for each $i \in [n]$, $L_i = (e^i_1 e^i_2 \hdots e^i_m)$. We note that for any $i\neq \tilde{i}$
\begin{align*}
\tr(L_i^\ast L_{\tilde{i}} L_{\tilde{i}}^\ast L_i) &= \tr(L_i^\ast L_{\tilde{i}} (L_i^\ast L_{\tilde{i}} )^\ast) = \sum_{j=1}^m \sum_{\tilde{j}=1}^m \absip{e^{\tilde{i}}_{\tilde{j}}}{e^{i}_{{j}}}^2= \frac{m^2 (k-\lambda_1)}{k^2},
\end{align*}
independent of which $i \neq \tilde{i}$ we started with.  Thus it follows from Theorem~\ref{thm:simplex} that $\{\cW_i\}_{i=1}^n$ is an equichordal tight fusion frame.
\end{proof}

Since this article first appeared online as a preprint, Theorem~\ref{thm:construct} has been generalized to a construction of equichordal tight fusion frames using so-called \emph{difference families}~\cite{FMW21}, using ideas from this paper,~\cite{FiSh20},~\cite{FiSh19}, and~\cite{FJKM17}.

\begin{ex}
In~\cite{Ion00}, a construction of semiregular divisble difference sets is presented that generalizes a construction of so-called relative difference sets.  Namely, if $R$ is a $(m,n,k,\lambda)$-semiregular relative difference set  (that is, a $(m,n,k,0,\lambda)$-semiregular divisible difference set) in a group $G$ relative to a subgroup $N$ and $D$ is an $(n,\ell,\mu)$ difference set in $N$, then $DR$ (that is, the set of all pairwise products) is an $(m,n,k\ell,k\mu,\lambda \ell^2)$-semiregular divisble difference set in $G$ relative to $N$.  Let $q$ be an odd prime power equal to $3$ modulo $4$. This construction applied to Paley difference sets $(q, (q-1)/2,(q-3)/4)$ and a certain class of semiregular relative difference sets $(q,q,q,1)$ found in~\cite{Pott} yields a $(q,q,q(q-1)/2,q(q-3)/4,(q-1)^2/4)$-semiregular divisible difference set that may be explicitly defined in terms of finite fields.  Let $\alpha$ be such that the finite field $\operatorname{GF}(q^2)$ is $\operatorname{GF}(q)$ adjoined with $\alpha$.  Then the constructed semiregular divisible difference set is 
\[
\{ x^2 + y^2 + x\alpha : x, y \in \operatorname{GF}(q), y\neq 0\} \subset \operatorname{GF}(q^2).
\]
The fusion frames generated via Theorem~\ref{thm:construct} applied to such semiregular divisible difference sets are always equiisoclinic.  One can see this by noting that the Paley difference sets generate equiangular tight frames (Theorem~\ref{thm:DS1}) and the $(q,q,q,1)$-relative difference sets generate mutually unbiased bases~\cite{GoRo09}.  The construction of the semiregular divisible difference set corresponds to a Kronecker product on the vector side, resulting in equiisoclinic fusion frames (see, e.g.,~\cite{King19b}).
\end{ex}

\begin{ex}
Another example comes from the construction algorithm in~\cite[Section 2]{Dav98}.  In particular, we let $G = \bZ_2 \times \bZ_6$.  Then $X_G$ is the Kronecker tensor product of the (appropriately scaled) $2\times 2$ and $6 \times 6$ discrete Fourier transform matrices (see Example~\ref{ex:dft}). For $N = \{(0,0),(1,0),(0,3),(1,3)\}$ and $N^\perp = \{(0,0),(0,2),(0,4)\}$, 
\[
D = \{(0,0), (0,1), (0,2), (0,4), (1,0), (1,5)\}
\]
is a $(3,4,6,2,3)$-semiregular divisible difference set. The resulting Grassmannian fusion frame is equichordal  but not equiisoclinic.
\end{ex}
Inspired by~\cite{JMF13}, we could like to find a set of sparse vectors which yield orthonormal bases for the Grassmannian fusion frames constructed via Theorem~\ref{thm:construct}. To do this, we make note of the  following well-known orthogonality relations which follow from Lemmas~\ref{lem:orthrel} and~\ref{lem:ann}.
\begin{lem}\label{lem:subsum}
For a subgroup $N \leq G$, let $\{\eta_j\}_{j=1}^m$ be an enumeration of $N^\perp$. Then
\[
\sum_{j=1}^m \eta_j(g) = \left\{ \begin{array}{lr} m & \textrm{if $g \in N$} \\0  & \textrm{if $g \notin N$}  \end{array} \right.
\]
Further, let $\{h_\ell\}_{\ell=1}^m$ be a set of coset representatives of $G/N$.  Then for any $\eta \in N^\perp$,
\[
\sum_{\ell=1}^m \eta(h_\ell) = \left\{ \begin{array}{lr} m & \textrm{if $\eta = \chi_0$} \\0  & \textrm{if $\eta \neq \chi_0$}  \end{array} \right.
\]
\end{lem}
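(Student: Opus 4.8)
The plan is to push both identities through the annihilator isomorphism of Lemma~\ref{lem:ann}, so that each reduces to one of the orthogonality relations of Lemma~\ref{lem:orthrel} applied to the quotient group $G/N$. Recall first that Lemma~\ref{lem:ann} gives $N^\perp \cong \widehat{G/N}$, so $|N^\perp| = |G/N| = m$; this is exactly why both sums in the statement run over $m$ terms, and it keeps the indexing consistent.

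For the first identity I would split on whether $g \in N$. When $g \in N$, every $\eta_j \in N^\perp$ satisfies $\eta_j(g) = 1$ by the definition of the annihilator, so the sum is trivially $m$. When $g \notin N$, the point is to rewrite the sum over the quotient: writing $\eta_j'$ for the image of $\eta_j$ under the isomorphism $N^\perp \cong \widehat{G/N}$ of Lemma~\ref{lem:ann}, we have $\eta_j(g) = \eta_j'(g+N)$, and as $\eta_j$ runs over $N^\perp$ the characters $\eta_j'$ run over all of $\widehat{G/N}$. Hence
\[
\sum_{j=1}^m \eta_j(g) = \sum_{\eta' \in \widehat{G/N}} \eta'(g+N),
\]
and since $g \notin N$ means $g + N \neq 0$ in $G/N$, the second orthogonality relation of Lemma~\ref{lem:orthrel}, applied to the group $G/N$ of order $m$, yields $0$.

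For the second identity I would use the same isomorphism in the opposite direction. For $\eta \in N^\perp$ write again $\eta(h_\ell) = \eta'(h_\ell + N)$ with $\eta' \in \widehat{G/N}$, and now exploit that $\{h_\ell + N\}_{\ell=1}^m$ is a complete (hence bijective) enumeration of $G/N$, precisely because the $h_\ell$ are coset representatives. Thus
\[
\sum_{\ell=1}^m \eta(h_\ell) = \sum_{a \in G/N} \eta'(a) = \eta'(G/N),
\]
which by the first orthogonality relation of Lemma~\ref{lem:orthrel} equals $m$ if $\eta' = \chi_0$ and $0$ otherwise. I would close by noting that $\eta' = \chi_0$ in $\widehat{G/N}$ if and only if $\eta = \chi_0$ in $\widehat{G}$, since the annihilator isomorphism carries the principal character to the principal character; this recovers the stated case distinction.

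There is no serious obstacle: the only step requiring care is the bookkeeping about which group each character and each orthogonality relation lives on, i.e.\ invoking the bijection $N^\perp \leftrightarrow \widehat{G/N}$ correctly so that each sum genuinely becomes a full character sum over $G/N$. Once that identification is fixed, both halves are immediate consequences of Lemma~\ref{lem:orthrel}.
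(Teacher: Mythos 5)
Your proposal is correct and follows essentially the same route as the paper: both pass through the isomorphism $N^\perp \cong \widehat{G/N}$ of Lemma~\ref{lem:ann} and then invoke the orthogonality relations of Lemma~\ref{lem:orthrel} on the quotient group $G/N$. Your write-up just makes explicit a few bookkeeping points (the bijectivity of the enumerations and that the principal character corresponds to the principal character) that the paper leaves implicit.
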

\begin{thm}\label{thm:sparse}
Let $\{\cW_i = \lspan\{e_j^i\}_{j=1}^m\}_{i=1}^n \in \FF(\bF^k,m,n)$ be equichordal as constructed in Theorem~\ref{thm:construct}, with all other notation the same.  Further let $\{h_\ell\}_{\ell=1}^m$ be a set of coset representatives of $G/N$ and define
\[
U = \frac{1}{\sqrt{m}}\left( \begin{array}{cccc} \eta_1(h_1) & \eta_2(h_1) & \cdots & \eta_m(h_1) \\  \eta_1(h_2) & \eta_2(h_2) & \cdots & \eta_m(h_2) \\ \vdots & \vdots& \ddots & \vdots \\  \eta_1(h_m) & \eta_2(h_m) & \cdots & \eta_m(h_m)  \end{array} \right).
\]
For each $i = 1, \hdots, n$ define
\[
\tilde{L}_i := \left(\begin{array}{c|c|c|c} \tilde{e}^i_1 & \tilde{e}^i_2 & \cdots&  \tilde{e}^i_m\end{array}\right):= \left(\begin{array}{c|c|c|c} e^i_1 & e^i_2 & \cdots&  e^i_m\end{array}\right) U^\ast.
\]
Then for all $i \in 1, \hdots, n$, $\cW_i = \lspan\{\tilde{e}_j^i\}_{j=1}^m$ and the vectors $\{\tilde{e}_j^i\}_{j=1,i=1}^{m,n}$ are almost flat and each vector has support size $k/m$.
\end{thm}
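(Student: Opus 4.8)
The plan is to observe that the matrix $U$ is unitary, so that $\tilde L_i = L_i U^\ast$ is merely a change of orthonormal basis inside each $\cW_i$; the first claim is then immediate and the remaining two reduce to reading off the coordinates of the new basis vectors via Lemma~\ref{lem:subsum}.

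First I would check unitarity. Writing $M = \sqrt m\,U$ with $M_{\ell j} = \eta_j(h_\ell)$, the $(j,j')$ entry of $M^\ast M$ is $\sum_{\ell=1}^m \overline{\eta_j(h_\ell)}\,\eta_{j'}(h_\ell) = \sum_{\ell=1}^m (\eta_j^{-1}\eta_{j'})(h_\ell)$. Since $N^\perp$ is a group (Lemma~\ref{lem:ann}), $\eta_j^{-1}\eta_{j'} \in N^\perp$, so the second identity in Lemma~\ref{lem:subsum} makes this sum $m$ when $j = j'$ and $0$ otherwise; hence $U^\ast U = I_m$. Right-multiplication by the unitary $U^\ast$ preserves column span, so $\cW_i = \col(L_i) = \col(\tilde L_i) = \lspan\{\tilde e^i_j\}_{j=1}^m$, and $\tilde L_i^\ast \tilde L_i = U L_i^\ast L_i U^\ast = I_m$ shows $\{\tilde e^i_j\}_j$ is again an ONB.

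Next I would compute the entries. From $\tilde e^i_j = \frac{1}{\sqrt m}\sum_{a=1}^m \overline{\eta_a(h_j)}\,e^i_a$, together with $e^i_a = \frac{1}{\sqrt k}((\chi_i\eta_a)(g))_{g \in D}$ and $\overline{\eta_a(h_j)} = \eta_a(-h_j)$, the $g$-th coordinate becomes
\[
(\tilde e^i_j)_g = \frac{\chi_i(g)}{\sqrt{mk}}\sum_{a=1}^m \eta_a(g - h_j).
\]
The first identity in Lemma~\ref{lem:subsum} makes the inner sum equal to $m$ if $g - h_j \in N$ and $0$ otherwise, so $(\tilde e^i_j)_g = \sqrt{m/k}\,\chi_i(g)$ for $g \in (h_j+N)\cap D$ and vanishes elsewhere. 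As $|\chi_i(g)| = 1$, every nonzero coordinate of every $\tilde e^i_j$ has the common modulus $\sqrt{m/k}$, establishing almost flatness, and the support of $\tilde e^i_j$ is exactly $D \cap (h_j + N)$.

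The support count is where I expect the real work to lie: I must show each coset $h_j+N$ meets $D$ in exactly $k/m$ points. Setting $a_t = |D \cap C_t|$ for the cosets $C_1,\dots,C_m$ of $N$, the semiregularity hypothesis $k^2 - \lambda_2 mn = 0$, inserted into Lemma~\ref{lem:div}, forces $\chi(D) = 0$ for every $\chi \in N^\perp\setminus\{\chi_0\}$. Since each such $\chi$ factors through $G/N$ by Lemma~\ref{lem:ann}, this says $\sum_{t=1}^m a_t\,\chi'(C_t) = 0$ for all nonprincipal $\chi' \in \widehat{G/N}$, while $\sum_t a_t = k$ for the principal character. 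Fourier inversion on the order-$m$ group $G/N$ then yields $a_t = k/m$ for every $t$, giving the support size. The key subtlety to flag is that this equidistribution of $D$ across the cosets of $N$ is precisely the point at which semiregularity, rather than the weaker divisible difference set property alone, is indispensable.
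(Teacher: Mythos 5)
Your proposal is correct, and for most of its length it runs parallel to the paper's proof: unitarity of $U$ via the second orthogonality relation in Lemma~\ref{lem:subsum}, preservation of the span and of orthonormality under right-multiplication by $U^\ast$, and the coordinate computation $(\tilde e^i_j)_g = \sqrt{m/k}\,\chi_i(g)$ for $g$ in the coset $h_j+N$ and $0$ otherwise via the first orthogonality relation. The one place you diverge is the support count, which you flag as ``the real work'': you prove directly that $D$ meets every coset of $N$ in exactly $k/m$ points, by noting that semiregularity forces $\chi(D)=0$ for all nonprincipal $\chi\in N^\perp$, pushing this down to $\widehat{G/N}$ via Lemma~\ref{lem:ann}, and applying Fourier inversion on $G/N$ to the coset-count function. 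The paper gets the same number essentially for free: having already shown $\tilde L_i^\ast\tilde L_i = I_m$, each $\tilde e^i_j$ is unit-norm, and since every nonzero coordinate has squared modulus $m/k$, the support size must be $k/m$. Your route costs more but buys more: it exhibits the equidistribution of $D$ over the cosets of $N$ as the structural mechanism (and isolates exactly where semiregularity, as opposed to the bare divisible-difference-set property, enters), whereas the paper's norm argument only certifies the cardinality and records the integrality of $k/m$ as a remark. Both are valid; yours is the more informative, the paper's the more economical.
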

\begin{proof}
We first note that Lemma~\ref{lem:subsum} implies that $U$ is unitary.  Thus $\tilde{L}_i\tilde{L}_i^\ast = L_i L_i^\ast$ for each $i$ and  $\{\tilde{e}_j^i\}$ yields the same fusion frame as $\{e_j^i\}$ in Theorem~\ref{thm:construct}.

We now would like to characterize the entries of the $\{\tilde{e}_j^i\}$.  Fix $g \in D$, $\ell \in 1, \hdots, m$, and $i \in 1, \hdots, n$.  Then we may apply Lemma~\ref{lem:subsum} to obtain
\begin{align*}
\tilde{e}_\ell^i (g) &= \frac{1}{\sqrt{km}} \sum_{j=1}^m (\chi_i \eta_j)(g) \overline{\eta_j(h_\ell)} = \frac{1}{\sqrt{km}} \chi_i(g) \sum_{j=1}^m \eta_j(g(h_\ell)^{-1}) \\
&=   \left\{ \begin{array}{lr} \frac{\sqrt{m}}{\sqrt{k}} \chi_i(g) & \textrm{if $g(h_\ell)^{-1} \in N$} \\0  & \textrm{if $g(h_\ell)^{-1} \notin N$}  \end{array} \right.
\end{align*}

Since each $\tilde{e}_j^i$ is unit-norm and $\chi_i(g)$ is always unimodular, we can characterize how many entries of any given vector are nonzero, namely $k/m$. Note that this means that $k/m \in \bN$ for semiregular divisible difference sets.
\end{proof}
Note that for a fixed $j \in 1, \hdots, m$, the support of each $\tilde{e}_j^i$ is the same.  The difference is the component-wise modulation by $\chi_i(g)$.\\

There are two papers in the literature which appeared before this article and in some sense use difference sets to construct equichordal tight fusion frames \cite{Cre08,BoPa15}.  The construction in the former paper involves difference sets of non-abelian groups, like the symplectic groups. The subspaces for the Grassmannian packing are created under orbits of the non-abelian groups.  Thus in that construction, one does not have direct control over whether the bases or projections associated to the subspaces are sparse or flat.  The latter construction starts with an $(n,k,\lambda)$-difference set $D$ in an abelian group.  Then instead of restricting $X_G$ to the rows corresponding to $D$, the rows outside of $D$ are zeroed out, resulting in $n$ vectors in $\bF^n$ which span a $k$-dimensional subspace.  The elements of $D$ are all shifted by an element in $G$ to obtain a new difference set and new set of $n$ vectors in $\bF^n$ which span a $k$-dimensional subspace.  This process is repeated for all elements of $G$, resulting in $n$ different subspaces.  One can find very spare orthonormal bases for the subspaces, namely, the $k$ standard orthonormal basis vectors representing the support set $D-g$ for each $g \in G$. Since divisible difference sets can sometimes be constructed using standard difference sets \cite{Pott}, there is the question of whether the construction in Theorem~\ref{thm:construct} is ever equivalent to the construction in \cite{BoPa15}.    Since the construction in \cite{BoPa15} results in $n$ subspaces of $\bF^n$, we would need a semiregular divisible difference set with $n = k$ for Theorem~\ref{thm:construct} to yield the same fusion frame.  We plug $n = k$ and the semiregular condition $k^2 - \lambda_2 mn =0$ into Equation~\ref{eqn:ddscount} to obtain $k(\lambda_1 + 1- \lambda_2 )= \lambda_1$.  Since $k > \lambda_1$, this can only happen if $\lambda_1 + 1- \lambda_2 = 0 = \lambda_1$, which in turn implies that the original difference set was trivially the entire group. We also note that so-called \emph{paired difference sets} are leveraged in~\cite{FIJKM20} to generate Grassmannian fusion frames that the not equivalent to ones constructed in Theorem~\ref{thm:construct}.

\section{Integrality Conditions}\label{sec:intc}

The goal of this section is to prove certain integrality conditions that must be satisfied for a Grassmannian fusion frame $\{\cW_i\}$ to be flat or almost flat. The results are generalizations of the conditions in \cite{STDH07} for flat equiangular tight frames to exist and similarly use Theorem~\ref{thm:algint}, which concerns algebraic integers, as the main mathematical tool in the proofs. For further reading on algebraic integers, see a standard algebra text like \cite{DummF}. 

\begin{defn} The \emph{algebraic integers} are the roots of monic polynomials in $\bZ[x]$.
\end{defn}

\begin{thm}\label{thm:algint} 
The intersection of the algebraic integers with the rationals is the integers.
\end{thm}
As a simple example of this fact, we note that the square root of a positive integer is either an integer or irrational.
\begin{thm}\label{thm:integral1}
Let $\mathcal{A}$ be a subring of the algebraic integers which is closed under conjugation.  Further let $\{\cW_i\}_{i=1}^n \in \FF(\bF^k,m,n)$ be equichordal.  For each $i \in [n]$, fix an orthonormal basis of $\cW_i$ and set the basis elements to be the columns of the matrix $L_i$. If for all $i \in [n]$ the entries of $\sqrt{k} L_i$ are in $\cA$, then
\[
\frac{km(mn-k)}{n-1} \in \bZ.
\]
\end{thm}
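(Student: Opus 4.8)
The plan is to recognize that the rational number in the conclusion is, up to the factor $k^2$, exactly the common value of $\tr(P_i P_j)$ for an equichordal tight fusion frame, and then to exhibit $k^2 \tr(P_i P_j)$ as an element of $\cA$, so that Theorem~\ref{thm:algint} forces it to be an integer. The whole argument is the interplay of two facts about the same number: it is manifestly rational (coming from the simplex value), and it is an algebraic integer (coming from the hypothesis on the entries of $\sqrt{k}L_i$).

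First I would pin down the common value of $\tr(P_i P_j)$. Because $\{\cW_i\}_{i=1}^n$ is an equichordal tight fusion frame, Theorem~\ref{thm:simplex} gives $d_c^2(\cW_i,\cW_j) = \frac{m(k-m)n}{k(n-1)}$ for every $i \neq j$, and by Definition~\ref{defn:chor} we have $\tr(P_i P_j) = m - d_c^2(\cW_i,\cW_j)$ with $P_i = L_i L_i^\ast$. A short simplification (the same one carried out for the equiisoclinic parameter after Theorem~\ref{thm:simplex}) yields $\tr(P_i P_j) = \frac{m(mn-k)}{k(n-1)}$, so that
\[
\frac{km(mn-k)}{n-1} = k^2\, \tr(P_i P_j).
\]
This is a fixed rational number, being a ratio of products of the positive integers $k$, $m$, $n$; that is the source of rationality.

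Next I would rewrite this trace in terms of the ONB vectors. Writing $e^i_a$ for the $a$-th column of $L_i$ and using the cyclicity of the trace, $\tr(P_i P_j) = \tr(L_i^\ast L_j L_j^\ast L_i) = \norm{L_i^\ast L_j}_F^2 = \sum_{a,b=1}^m \absip{e^i_a}{e^j_b}^2$. The algebraic-integer argument then proceeds entry by entry: since $k\,\ip{e^i_a}{e^j_b} = \sum_t \overline{(\sqrt{k}\,e^i_a)_t}\,(\sqrt{k}\,e^j_b)_t$ and every entry of $\sqrt{k}L_i$ and $\sqrt{k}L_j$ lies in $\cA$, which is a ring closed under conjugation, we get $k\,\ip{e^i_a}{e^j_b} \in \cA$. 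Hence $k^2 \absip{e^i_a}{e^j_b}^2 = \bigl(k\,\ip{e^i_a}{e^j_b}\bigr)\,\overline{\bigl(k\,\ip{e^i_a}{e^j_b}\bigr)} \in \cA$, and summing the finitely many such terms over $a,b \in [m]$ keeps us inside $\cA$. Thus
\[
\frac{km(mn-k)}{n-1} = \sum_{a,b=1}^m k^2 \absip{e^i_a}{e^j_b}^2 \in \cA.
\]

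Finally, since $\cA$ is contained in the algebraic integers and the number $\frac{km(mn-k)}{n-1}$ is rational, Theorem~\ref{thm:algint} tells us it must lie in $\bZ$, which is the claim. I do not anticipate a genuine obstacle in this proof; the only step demanding care is the bookkeeping that identifies the rational target \emph{exactly} with $k^2\tr(P_i P_j)$ (making rationality transparent) while \emph{simultaneously} presenting it as a sum of elements of $\cA$ (making algebraic integrality transparent), after which Theorem~\ref{thm:algint} does the real work.
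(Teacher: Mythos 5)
Your proposal is correct and follows essentially the same route as the paper: both identify $\frac{km(mn-k)}{n-1}$ with $k^2\tr(P_iP_j) = \tr\bigl((\sqrt{k}L_i)^\ast(\sqrt{k}L_j)(\sqrt{k}L_j)^\ast(\sqrt{k}L_i)\bigr)$, observe that this lies in $\cA$ because $\cA$ is a ring closed under conjugation, and invoke Theorem~\ref{thm:algint}. Your entrywise expansion of the trace as $\sum_{a,b}k^2\absip{e^i_a}{e^j_b}^2$ is just a more explicit rendering of the paper's one-line matrix computation.
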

We note that when $m=1$, this yields the same result about flat equiangular tight frames as \cite[Theorem 18]{STDH07}.
\begin{proof}
By applying Definitions~\ref{defn:chor} and~\ref{defn:diffeq} for $i \neq j$
 and using the fact that $\cA$ is a ring closed under conjugation, we obtain
\begin{align*}
\cA &\ni \tr(k^2 L_i^\ast L_j L_j^\ast L_i) = k^2 \tr(P_i P_j) \\
&= \frac{k^2 m(mn-k)}{k(n-1)} = \frac{k m(mn-k)}{n-1}.
\end{align*}
As $\cA \cap \bQ = \cA \cap \bZ$ (Theorem~\ref{thm:algint}), it follows that $\frac{k m(mn-k)}{n-1} \in \bZ$.
\end{proof}
Theorem~\ref{thm:integral1} in particular holds if $\cA = \bZ[\zeta]$ for $\zeta$ a primitive root of unity. We note that Theorem~\ref{thm:integral1} applied to $\bZ[\zeta]$ is weaker than the corresponding result, [Corollary 19] in \cite{STDH07}, due to the fact that the equichordal condition, even when $m=1$, is squared relative to the equiangular tight frame condition.

\begin{thm}\label{thm:inteqio}
Let $\mathcal{A}$ be a subring of the algebraic integers which is closed under conjugation.  Further let $\{\cW_i\}_{i=1}^n \in \FF(\bF^k,m,n)$ be equiisoclinic.  For each $i \in [n]$, fix an orthonormal basis of $\cW_i$ and set the basis elements to be the columns of the matrix $L_i$. If for all $i \in [n]$ the entries of $\sqrt{k} L_i$ are in $\cA$, then
\[
\frac{k(mn-k)}{n-1} \in \bZ.
\]
\end{thm}
\begin{proof}
Since the fusion frame is equiisoclinic,~\eqref{eqn:equiiso} yields that $\alpha = \frac{mn-k}{k(n-1)}$, and it follows from Definition~\ref{defn:diffeq} that $k^2 L_i^\ast L_j L_j^\ast L_i = k^2 \alpha I_k$ has entries in $\cA$.  Applying Theorem~\ref{thm:algint}, we obtain $k^2 \alpha = \frac{k(mn-k)}{n-1}  \in \cA \cap \bQ \subset \bZ$.
\end{proof}

Another integrality condition is as follows.
\begin{thm}\label{thm:int2a}
Let $\{\cW_i\}_{i=1}^n \in \FF(\bF^k,m,n)$.  For each $i \in [n]$, fix an orthonormal basis of $\cW_i$ and set the basis elements to be the columns of the matrix $L_i$. If for all $i \in [n]$ the entries of $\sqrt{k} L_i$ are all $q$th roots of unity with $q = p^s$, $p$ prime, then the following hold.
\begin{itemize}
\item \cite[Theorem 20]{STDH07} $p$ divides $nm$;
\item \cite{Massey1993,KaPa03} If $q=2$, then $4$ divides $nm$, $k=nm=2$, or trivially $k=nm=1$.
\end{itemize}
\end{thm}
The statement of [Theorem 20] in \cite{STDH07} includes the hypothesis that the considered system is an equiangular tight frame; however, that restriction is not used at all in the proof.  One can always form a flat, real tight frame (i.e., $q=2$) by removing rows of a Hadamard matrix and appropriately scaling. An $n\times n$ Hadamard matrix only exists if $n =2$ or $4$ divides $n$ (see, e.g., \cite{HadMat}).  Theorem~\ref{thm:int2a} tells us that we can only form real, flat tight frames with dimensions that suggest they could have come from a Hadamard matrix.\\
\begin{conj}
Real, flat tight frames must come from an appropriately scaled submatrix of a Hadamard matrix.
\end{conj}
In the non-redundant case of real equal-norm flat orthonormal bases, the conjecture is trivially true as that is the definition of a Hadamard matrix.

\section*{Acknowledgements}
The author is indebted to Bernhard Bodmann, Matt Fickus, Joey Iverson, John Jasper, and Dustin Mixon, and  for interesting discussions on the connections between combinatorial design theory and frames.  In particular, Dustin Mixon gave an enlightening talk on tight fusion frames during the Summer of Frame Theory held at the Air Force Institute of Technology. The author was supported during the initial stages of this research in part by the Explorationsprojekt ``Hilbert Space Frames and Algebraic Geometry'' funded by the Zentrum f\"ur Forschungsf\"orderung der Universit\"at Bremen.


\begin{thebibliography}{PWTH18}

\bibitem[ACFW18]{ACFW18}
Marcus Appleby, Tuan-Yow Chien, Steven Flammia, and Shayne Waldron,
  \emph{Constructing exact symmetric informationally complete measurements from
  numerical solutions}, J. Phys. A \textbf{51} (2018), no.~16.

\bibitem[AFZ15]{AFZ15}
D.~M. Appleby, Christopher~A. Fuchs, and Huangjun Zhu, \emph{Group theoretic,
  {L}ie algebraic and {J}ordan algebraic formulations of the {SIC} existence
  problem}, Quantum Inf. Comput. \textbf{15} (2015), no.~1-2, 61--94.

\bibitem[BE13]{BaEh13}
C.~Bachoc and M.~Ehler, \emph{Tight {$p$}-fusion frames}, Appl. Comput. Harmon.
  Anal. \textbf{35} (2013), no.~1, 1--15.

\bibitem[BG73]{BjGo73}
{\.A}ke Bj{\"o}rck and Gene~H. Golub, \emph{Numerical methods for computing
  angles between linear subspaces}, Math. Comp. \textbf{27} (1973), 579--594.

\bibitem[BH16]{BH15}
Bernhard~G. Bodmann and John Haas, \emph{Achieving the orthoplex bound and
  constructing weighted complex projective 2-designs with {S}inger sets},
  Linear Algebra Appl. \textbf{511} (2016), 54--71.

\bibitem[BLR15]{BLR15}
Marcin Bownik, Kurt Luoto, and Edward Richmond, \emph{A combinatorial
  characterization of tight fusion frames}, Pacific J. Math. \textbf{275}
  (2015), no.~2, 257--294.

\bibitem[Bod07]{Bod07}
Bernhard~G. Bodmann, \emph{Optimal linear transmission by loss-insensitive
  packet encoding}, Appl. Comput. Harmon. Anal. \textbf{22} (2007), no.~3,
  274--285.

\bibitem[BP15]{BoPa15}
Irena Bojarovska and Victoria Paternostro, \emph{Gabor fusion frames generated
  by difference sets}, Wavelets and Sparsity XVI (Manos Papadakis and Vivek~K.
  Goyal, eds.), Society of Photo-Optical Instrumentation Engineers (SPIE)
  Conference Series, vol. 9597, 2015.

\bibitem[CHS96]{GrassPack}
John~H. Conway, Ronald~H. Hardin, and Neil J.~A. Sloane, \emph{Packing lines,
  planes, etc.: packings in {G}rassmannian spaces}, Experiment. Math.
  \textbf{5} (1996), no.~2, 139--159.

\bibitem[CK04]{CaK04}
Peter~G. Casazza and Gitta Kutyniok, \emph{Frames of subspaces}, Wavelets,
  frames and operator theory, Contemp. Math., vol. 345, Amer. Math. Soc.,
  Providence, RI, 2004, pp.~87--113.

\bibitem[CK08]{CaK08}
\bysame, \emph{Robustness of fusion frames under erasures of subspaces and of
  local frame vectors}, Radon transforms, geometry, and wavelets (New Orleans,
  LA, 2006), Contemp. Math., vol. 464, Amer. Math. Soc., Providence, RI, 2008,
  pp.~149--160.

\bibitem[CK13]{CaKBook}
Peter~G. Casazza and Gitta Kutyniok (eds.), \emph{Finite frames}, Applied and
  Numerical Harmonic Analysis: Theory and applications,
  Birkh\"{a}user/Springer, New York, 2013.

\bibitem[CKL08]{CKL08}
Peter~G. Casazza, Gitta Kutyniok, and Shidong Li, \emph{Fusion frames and
  distributed processing}, Appl. Comput. Harmon. Anal. \textbf{25} (2008),
  no.~1, 114--132.

\bibitem[CKM16]{CKM16}
Henry Cohn, Abhinav Kumar, and Gregory Minton, \emph{Optimal simplices and
  codes in projective spaces}, Geom. Topol. \textbf{20} (2016), no.~3,
  1289--1357.

\bibitem[Cre08]{Cre08}
Jean Creignou, \emph{Constructions of {G}rassmannian simplices},
  arXiv:cs/0703036, 2008.

\bibitem[Dav98]{Dav98}
James~A. Davis, \emph{New semiregular divisible difference sets}, Discrete
  Math. \textbf{188} (1998), no.~1-3, 99--109.

\bibitem[DF04]{DummF}
David~S. Dummit and Richard~M. Foote, \emph{Abstract algebra}, third ed., John
  Wiley \& Sons, Inc., Hoboken, NJ, 2004.

\bibitem[DF07]{DiFe07}
Cunsheng Ding and Tao Feng, \emph{A generic construction of complex codebooks
  meeting the {W}elch bound}, IEEE Trans. Inform. Theory \textbf{53} (2007),
  no.~11, 4245--4250.

\bibitem[DHST08]{DHST08}
I.~S. Dhillon, R.~W. Heath, Jr., T.~Strohmer, and J.~A. Tropp,
  \emph{Constructing packings in {G}rassmannian manifolds via alternating
  projection}, Experiment. Math. \textbf{17} (2008), no.~1, 9--35.

\bibitem[EG19]{EhGr16}
Martin Ehler and Manuel Gr\"{a}f, \emph{Reproducing kernels for the irreducible
  components of polynomial spaces on unions of {G}rassmannians}, Constr.
  Approx. \textbf{49} (2019), no.~1, 29--58.

\bibitem[EKB10]{EKB10}
Yonina~C. Eldar, Patrick Kuppinger, and Helmut B{\"o}lcskei, \emph{Block-sparse
  signals: Uncertainty relations and efficient recovery}, IEEE Trans. Signal
  Process. \textbf{58} (2010), no.~6, 3042--3054.

\bibitem[FHS17]{FHS17}
Christopher~A. Fuchs, Michael~C. Hoang, and Blake~C. Stacey, \emph{The {SIC}
  question: History and state of play}, Axioms \textbf{6} (2017), no.~3.

\bibitem[FIJ{\etalchar{+}}20]{FIJKM20}
Matthew Fickus, Joseph~W. Iverson, John Jasper, Emily~J. King, and Dustin~G.
  Mixon, \emph{Grassmannian codes from paired difference sets}, Accepted
  pending minor revision, \url{https://arxiv.org/pdf/2010.06639.pdf}, 2020.

\bibitem[FJKM18]{FJKM17}
Matthew Fickus, John Jasper, Emily~J. King, and Dustin~G. Mixon,
  \emph{Equiangular tight frames that contain regular simplices}, Linear
  Algebra Appl \textbf{555} (2018), 98--138.

\bibitem[FJM18]{FJM18}
Matthew Fickus, John Jasper, and Dustin~G. Mixon, \emph{Packings in real
  projective spaces}, SIAM J. Appl. Algebra Geom. \textbf{2} (2018), no.~3,
  377--409.

\bibitem[FJMW17]{FJMW17}
Matthew Fickus, John Jasper, Dustin~G. Mixon, and Cody~E. Watson, \emph{A brief
  introduction to equi-chordal and equi-isoclinic tight fusion frames},
  Wavelets and Sparsity XVII (Yue~M. Lu, Dimitri Van~De Ville, and Manos
  Papadakis, eds.), Society of Photo-Optical Instrumentation Engineers (SPIE)
  Conference Series, vol. 10394, 2017.

\bibitem[FM16]{FM15}
Matthew Fickus and Dustin~G. Mixon, \emph{Tables of the existence of
  equiangular tight frames}, arXiv:1504.00253, 2016.

\bibitem[FMW21]{FMW21}
Matthew Fickus, Benjamin~R. Mayo, and Cody~E. Watson, \emph{Certifying the
  novelty of equichordal tight fusion frames}, arXiv:2103.03192, 2021.

\bibitem[For03]{Forn03}
Massimo Fornasier, \emph{Decompositions of {H}ilbert spaces: local construction
  of global frames}, Constructive theory of functions, DARBA, Sofia, 2003,
  pp.~275--281.

\bibitem[For04]{Forn04}
\bysame, \emph{Quasi-orthogonal decompositions of structured frames}, J. Math.
  Anal. Appl. \textbf{289} (2004), no.~1, 180--199.

\bibitem[FS19]{FiSh19}
M.~{Fickus} and C.~A. {Schmitt}, \emph{Equi-isoclinic subspaces from difference
  sets}, 2019 13th International conference on Sampling Theory and Applications
  (SampTA), 2019, pp.~1--4.

\bibitem[FS20]{FiSh20}
Matthew Fickus and Courtney~A. Schmitt, \emph{Harmonic equiangular tight frames
  comprised of regular simplices}, Linear Algebra Appl. \textbf{586} (2020),
  130--169.

\bibitem[GKK01]{GKK01}
Vivek~K. Goyal, Jelena Kova{\v{c}}evi{\'c}, and Jonathan~A. Kelner,
  \emph{Quantized frame expansions with erasures}, Appl. Comput. Harmon. Anal.
  \textbf{10} (2001), no.~3, 203--233.

\bibitem[GR09]{GoRo09}
Chris Godsil and Aidan Roy, \emph{Equiangular lines, mutually unbiased bases,
  and spin models}, European J. Combin. \textbf{30} (2009), no.~1, 246--262.

\bibitem[Hor07]{HadMat}
K.~J. Horadam, \emph{Hadamard matrices and their applications}, Princeton
  University Press, Princeton, NJ, 2007.

\bibitem[HP04]{HoPa04}
Roderick~B. Holmes and Vern~I. Paulsen, \emph{Optimal frames for erasures},
  Linear Algebra Appl. \textbf{377} (2004), no.~15, 31--51.

\bibitem[Ion00]{Ion00}
Yury~J. Ionin, \emph{A technique for constructing divisible difference sets},
  J. Geom. \textbf{67} (2000), no.~1-2, 164--172, Second Pythagorean Conference
  (Pythagoreion, 1999).

\bibitem[JKM19]{JKM19}
John Jasper, Emily~J.\ King, and Dustin Mixon, \emph{Game of {S}loanes: Best
  known packings in complex projective space}, Wavelets and Sparsity XVIII
  (Dimitri Van~De Ville, Manos Papadakis, and Yue~M. Lu, eds.), Society of
  Photo-Optical Instrumentation Engineers (SPIE) Conference Series, vol. 11138,
  SPIE, 2019.

\bibitem[JMF14]{JMF13}
John Jasper, Dustin~G. Mixon, and Matthew Fickus, \emph{Kirkman equiangular
  tight frames and codes}, IEEE Trans Inf Theory \textbf{60} (2014), no.~1,
  170--181.

\bibitem[Jun82]{Jung82}
Dieter Jungnickel, \emph{On automorphism groups of divisible designs}, Canadian
  J. Math. \textbf{34} (1982), no.~2, 257--297.

\bibitem[Kin21]{King19b}
Emily~J. King, \emph{Creating subspace packings from other subspace packings},
  Linear Algebra Appl. \textbf{625} (2021), 68--80.

\bibitem[KP03]{KaPa03}
George~N. Karystinos and Dimitris~A. Pados, \emph{New bounds on the total
  squared correlation and optimum design of {DS-CDMA} binary signature sets},
  IEEE Trans. Commun. \textbf{51} (2003), no.~1, 48--51.

\bibitem[KPCL09]{GrassFus}
Gitta Kutyniok, Ali Pezeshki, Robert Calderbank, and Taotao Liu, \emph{Robust
  dimension reduction, fusion frames, and {G}rassmannian packings}, Appl.
  Comput. Harmon. Anal. \textbf{26} (2009), no.~1, 64--76.

\bibitem[LS73]{LemSei73}
Piet W.~H. Lemmens and Johan~J. Seidel, \emph{Equi-isoclinic subspaces of
  {E}uclidean spaces}, Nederl. Akad. Wetensch. Proc. Ser. A 76 Indag. Math.
  \textbf{35} (1973), 98--107.

\bibitem[MM93]{Massey1993}
James~L. Massey and Thomas Mittelholzer, \emph{Welch's bound and sequence sets
  for {C}ode-{D}ivision {M}ultiple-{A}ccess systems}, pp.~63--78, Springer New
  York, New York, NY, 1993.

\bibitem[MP19]{MiPa19}
Dustin~G. Mixon and Hans Parshall, \emph{Exact line packings from numerical
  solutions}, 2019 13th International conference on Sampling Theory and
  Applications (SampTA), 2019.

\bibitem[Osw94]{Osw94}
Peter Oswald, \emph{Multilevel finite element approximation: Theory and
  applications}, Teubner Skripten zur Numerik. [Teubner Scripts on Numerical
  Mathematics], B. G. Teubner, Stuttgart, 1994.

\bibitem[Osw97]{Osw97}
Peter Oswald, \emph{Frames and space splittings in {H}ilbert spaces, part 1},
  1997, pp.~1--32.

\bibitem[Pot95]{Pott}
Alexander Pott, \emph{Finite geometry and character theory}, Lecture Notes in
  Mathematics, vol. 1601, Springer-Verlag, Berlin, 1995.

\bibitem[PWTH18]{PWTH16}
R.~{Pitaval}, L.~{Wei}, O.~{Tirkkonen}, and C.~{Hollanti}, \emph{Density of
  spherically embedded {S}tiefel and {G}rassmann codes}, IEEE Transactions on
  Information Theory \textbf{64} (2018), no.~1, 225--248.

\bibitem[Ran55]{Ran55}
R.~A. Rankin, \emph{The closest packing of spherical caps in $n$ dimensions},
  Proc Glasgow Math Assoc \textbf{2} (1955), 139--144.

\bibitem[SAH14]{SAH14}
N.~Mukund Sriram, B.~S. Adiga, and K.~V.~S. Hari, \emph{Grassmannian fusion
  frames and its use in block sparse recovery}, Signal Processing \textbf{94}
  (2014), 498--502.

\bibitem[SH03]{StH03}
Thomas Strohmer and Robert~W. Heath, Jr., \emph{Grassmannian frames with
  applications to coding and communication}, Appl. Comput. Harmon. Anal.
  \textbf{14} (2003), no.~3, 257--275.

\bibitem[Slo]{Sloane}
N.~J.~A. Sloane,
  \emph{http://www2.research.att.com/\~{}njas/grass/grasstab.html}, Online at
  http://www2.research.att.com/\~{}njas/grass/grassTab.html.

\bibitem[SS98]{ShSl98}
P.~W. Shor and N.~J.~A. Sloane, \emph{A family of optimal packings in
  {G}rassmannian manifolds}, J. Algebraic Combin. \textbf{7} (1998), no.~2,
  157--163.

\bibitem[STDJ07]{STDH07}
M\'aty\'as~A. Sustik, Joel~A. Tropp, Inderjit~S. Dhillon, and Robert W.~Heath
  Jr, \emph{On the existence of equiangular tight frames}, Linear Algebra Appl.
  \textbf{426} (2007), 619--635.

\bibitem[Sun06]{Sun06}
Wenchang Sun, \emph{{$G$}-frames and {$g$}-{R}iesz bases}, J. Math. Anal. Appl.
  \textbf{322} (2006), no.~1, 437--452.

\bibitem[Wal18]{Waldron18}
Shayne F.~D. Waldron, \emph{An introduction to finite tight frames}, Applied
  and Numerical Harmonic Analysis, Birkh\"{a}user/Springer, New York, 2018.

\bibitem[XZG05]{XZG05}
Pengfei Xia, Shengli Zhou, and Georgios~B. Giannakis, \emph{Achieving the
  {W}elch bound with difference sets}, IEEE Trans. Inform. Theory \textbf{51}
  (2005), no.~5, 1900--1907.

\end{thebibliography}

\newcommand{\etalchar}[1]{$^{#1}$}
\providecommand{\bysame}{\leavevmode\hbox to3em{\hrulefill}\thinspace}
\providecommand{\MR}{\relax\ifhmode\unskip\space\fi MR }
\providecommand{\MRhref}[2]{%
  \href{http://www.ams.org/mathscinet-getitem?mr=#1}{#2}
}
\providecommand{\href}[2]{#2}

\end{document}